  \newcommand\phantomsection\relax
  \newcommand{\url}[1]{#1}
  \newcommand{\href}[2]{#2}
\newtheorem{theorem}{Theorem}
\newtheorem{proposition}[theorem]{Proposition}
\newtheorem{definition}[theorem]{Definition}
\newtheorem{corollary}[theorem]{Corollary}
\theoremstyle{definition}
\newtheorem{remark}[theorem]{Remark}
\newtheorem{example}{Example}
\newcommand{\imi}{{\rm i}}
\newcommand{\EE}{{\mathbb E}}
\newcommand{\PP}{{\mathbb P}}
\newcommand{\s}{^*}
\newcommand{\halmos}{\vspace{3mm} \hfill $\Box$}
\renewcommand{\paragraph}{\@startsection{paragraph}{4}  {\z@}{0.5\baselineskip \@plus 1ex \@minus .2ex}{-1em}{\normalfont\normalsize\it\bfseries}}
\begin{document}
\title{Speed of convergence to the quasi-stationary\\ distribution for L\'evy input fluid queues}
\author[Z.\ Palmowski and M.\ Vlasiou]{Zbigniew Palmowski}
\address{Faculty of Pure and Applied Mathematics\\
Wroc\l aw University of Science and Technology\\
Wyb. Wyspia\'nskiego 27, 50-370 Wroc\l aw, Poland}
\email{zbigniew.palmowski@gmail.com}
\author[]{Maria Vlasiou}
\address{Department of Mathematics and Computer Science\\ Eindhoven University of Technology\\ The Netherlands}
\email{m.vlasiou@tue.nl}

\thanks{This work is partially supported by an NWO individual grant through project 632.003.002
and by the Ministry of Science and Higher Education of Poland under the grant 2013/09/B/ST1/01778
(2013-2016).}

\date{\today}
\subjclass[2010]{60G51, 60G50, 60K25} %
\keywords{}

\begin{abstract}
In this note we prove that the speed of convergence of the workload of a L\'evy-driven queue to the quasi-stationary distribution is of order $1/t$.
We identify also the Laplace transform of the measure giving this speed and provide some examples.
\vspace{3mm}

\noindent {\sc Keywords.} L\'evy processes $\star$ storage systems $\star$ quasi-stationary distribution $\star$  Laplace transforms $\star$ fluctuation theory $\star$ speed of convergence
\end{abstract}
\maketitle

\section{Introduction}
In this paper, we consider a storage system with L\'evy netput. In other words,  the workload process $\{Q(t), t\geq 0\}$
is a spectrally one-sided L\'{e}vy process $X(t)$ that is reflected at $0$:
\begin{equation}\label{workload}
Q(t):=x+X(t)-\inf_{s\leq t} (x+X(s))^-.
\end{equation}
We assume that the drift of the process $X(t)$ is negative; that is, we have $\EE X(1)<0$.
This stability condition guarantees the existence of a stationary distribution $\pi$ of $Q$,
which by virtue of ‘Reich’s identity' can be expressed in term of the all-time supremum:
\begin{equation}\label{pi}
\pi(x)=\PP\left(\sup_{t\geq 0} X(t)\leq x\right).
\end{equation}
In the sequel, we consider the initial distribution $Q(0)$ sampled from this steady-state distribution which
is indicated by adding the subscript $\pi$ to the probability measure $\PP$ and to the associated expectation $\EE$.

Now, let $T$ denote the {\it busy period}; that is $$T=\inf\{t\ge0:\ Q(t)=0\}.$$
We will further consider the Yaglom limit
\[
\label{def2}
\lim_{t\rightarrow\infty}\PP_\pi(Q(0)\in d x, Q(t)\in d y\,|\,T >t):= \mu (d x, d y),
\]
where the convergence is to be understood in the weak sense.
Yaglom limits are a probability measure and a particular case of the {\it quasi-stationary (QS) distribution},
which is and invariant distribution for the process conditioned on non-extinction; that is, we condition on the event that the process survives some killing event (e.g.\ related with
exiting from some subset of possible values).

Yaglom \cite{MR0022045} was the first to explicitly  identify QS
distributions for the subcritical Bienaym\'e-Galton-Watson branching process.
This result has been generalized in the context of
the continuous-time branching process and the Fleming-Viot process; see \cite{MR3498004, MR2318407, MR2299923}.
Similar results were also derived for Markov chains on positive integers with an absorbing state at the origin; see Seneta and Vere-Jones \cite{Seneta}, Tweedie
\cite{tweedie}, Jacka and Roberts \cite{jackaroberts} and the bibliographic database of Pollet \cite{pollett}. Recently, Foley and McDonald showed that Yaglom limits may depend on the starting state \cite{foley17}.

Work on QS distributions has been very extensive.
Martinez and San Martin \cite{Martmart} analyze the Brownian motion with drift exiting from the positive half-line,
complementing the result for random walks obtained by Iglehart \cite{Iglehart}.
Later, QS laws have been studied for various L\'evy processes.
Kyprianou \cite{kyprianou} found the Laplace transform of the QS distribution for the workload process of the stable $M/G/1$
queue with service times that have a rational moment generating function.
Kyprianou and Palmowski \cite{kyprpalm} identified the quasi-stationary
distribution associated with a general light-tailed L\'{e}vy process. Haas and Rivero \cite{rivero} found (after appropriate scaling)
the QS distribution  when the L\'{e}vy process under study has a jump measure with a regularly varying tail. The speed of convergence (in total variation) to the quasi-stationary distribution for population processes has been studied in \cite{champagnat16}. Finally, Mandjes et al.\ \cite{MPR} derived the QS distribution of the workload process $Q(t)$. This paper builds upon \cite{MPR}.

A contribution of this paper lies in proving that the speed of convergence to the quasi-stationary distribution is surprisingly slow (of order $1/t$). We also identify a measure $\xi(dx,dy)$ (which we call {\it second-order quasi-stationary measure}), such that
\begin{equation}\label{speeddef}
\lim_{t\rightarrow\infty}t\times |\PP_\pi(Q(0)\in d x, Q(t)\in d y\,|\,T >t)-\mu (d x, d y)|=\xi(dx,dy).
\end{equation}
We hence prove the conjecture posed in Polak and Rolski \cite{PR}, which proved the above statement for a birth-death process by using an asymptotic expansion of a transition function
and certain properties of Bessel functions. In this paper, we suggest new method, which relies on a refined Tauberian-type expansion of the Laplace transform. We also analyze in detail the $M/M/1$ queue and a Brownian-driven queue.

If we want to simulate the quasi-stationary distribution directly from definition
\eqref{def2}, then the result stated in \eqref{speeddef} shows that the speed of such a simulation is
very slow. Still, in the literature there are papers giving other efficient algorithms of simulation of quasi-stationary measures; see e.g.\
Blanchet et al.\ \cite{Jose} and references therein.

The main result in \eqref{speeddef} contrasts the typical results derived for the regular stationary distribution of Markov processes where, in most of the cases, the rate is exponential.
More precisely, for many models the distance between the distribution of the stochastic process at time $t$ and its stationary
distribution decays exponentially fast in $t$. The typical distances used are the total variation distance, the separation distance, and the $L^2$ distance.
The classical results concern mainly Markov chains
and use Perron and Frobenius theory, renewal equations or the coupling method; see e.g.\ \cite{2,3,4,5,6}
and references therein.
Another method concerns Harris recurrent Markov processes and
it is based on the construction of a
special Lyapunov function and  then the application of Foster-Lyapunov criteria; see e.g.\ \cite{1, 7, 8}.
All the above-mentioned methods though are different from the one used in this paper, which is based on expansions of Laplace transforms.

The paper is organized as follows.
In next section, we introduce the notation and basics facts that are used later.
In Section \ref{qsd}, we present the main results.
The central step for the proof of the main results is given in Section \ref{mainstep}.
Finally, the last section provides some examples.

\section{Preliminaries}\label{prel}
We follow \cite{kyprianoubook} for definitions,  notations and basic facts on L\'evy processes. Let $X\equiv (X(t))_t$ be a {\it spectrally negative}
L\'{e}vy process, which is defined on the filtered space $(\Omega,\mathcal{ F},\{\mathcal{F}_t\}_{t\geq 0}, \PP)$ with the natural filtration that satisfies the usual assumptions of right continuity and completion. We define $\PP_x$ as $\PP_x(X(0)=x)=1$ and $\PP_0=\PP$; similarly, $\EE_x$ is the expectation with respect to $\PP_x$. We denote by $\Pi(\cdot)$ the jump measure of $X$, which is supported by spectral negativity in the non-positive half-line; in other words, jumps are non-positive.
We define the Laplace exponent $\psi({\eta})$ by
\begin{equation}\label{psi}
\EE e^{{\eta} X(t)}=e^{t\psi({\eta})},
\end{equation}
for ${\eta}\in \mathbb{R}$ such that the left hand side of \eqref{psi} is well-defined (which holds at least for $\eta \geq 0$).
We denote by $\Phi(s):=\inf\{{\eta}\ge0: \psi({\eta})>s\}$ the right inverse of $\psi$; see \cite{kyprianoubook} for details.

\paragraph{Dual process} We also consider the {\it dual process} $\hat{X}_t=-X_t$ with jump measure $\hat{\Pi}\left(0,y\right)=\Pi\left(-y,0\right)$.
Note that $\hat{X}(t)$ is a spectrally positive process having only non-negative jumps.
Characteristics of $\hat{X}$ are indicated by using the same symbols as for $X$, but with a `$\hat{\hspace{3mm}}$' added.
In particular,
\begin{equation}\label{psidual}
\hat{\psi}(\eta)= t^{-1} \log \EE e^{{\eta} \hat{X}(t)}=t^{-1} \log \EE e^{-{\eta} X(t)}=\psi(-\eta).
\end{equation}

We skip the symbol `$\hat{\hspace{3mm}}$' for $Q(t)$ and hence for $T$ and all quantities related to $Q$ as it will be clear from the context if a statement concerns the spectrally negative or the spectrally positive case.

\paragraph{Asymptotic expansions}

Consider a function $f:\mathbb{R}\to\mathbb{R}$ such that $f(x)=0$ for $x<0$. Let $\tilde f(z):=\int_0^\infty e^{-z x}f(x)\,d x$ for $z\in \mathbb{C}$ be its Laplace transform. Consider singularities of $\tilde{f}(z)$; among these, let $a_0<0$ be the one  with the largest real part. Notice that this yields the integrability of $\int_0^\infty|f(x)|\, d x$. The inversion formula reads
\[
f(x)=\frac{1}{2\pi \imi}\int_{a-\imi \infty}^{a+\imi\infty}\tilde{f}(z) e^{zx}\, d z
\]
for some (and then any) $a>a_0$. In this paper, we need the following {\it Tauberian theorem}, found in Doetsch \cite[Theorem~37.1]{Doetsch1974}, where the behaviour of the Laplace transform around the singularity $a_0<0$ plays a crucial role.

First recall the concept of the $\mathfrak W$-contour, centered at $a_0$, with a half-angle of opening $\pi/2<\psi\le \pi$, as depicted on \cite[Fig.\ 30, p.\ 240]{Doetsch1974} ; also, for the purposes of our problem, ${\mathcal G}_{a_0}(\psi)$  is the region between the contour $\mathfrak W$ and the line $\Re(z)=0$. More precisely,
\[
{\mathcal G}_{a_0}(\psi) \equiv \{z \in \mathbb{C}; \Re(z) < 0, z \ne a_0, |\arg (z - a_0)| < \psi \},
\]
where $\arg z$ is the principal part of the argument of the complex number $z$. In the following theorem, conditions  are identified that provide an asymptotic expansion of the Laplace transform.

\begin{theorem}[\protect{\cite[Theorem~37.1]{Doetsch1974} }]\label{doetsch}
Suppose that for  $\tilde f:\mathbb{C}\to \mathbb{C}$ and $a_0 < 0$ the following three conditions hold:
\begin{enumerate}[label=(A\arabic*)]
\item \label{cond:A1} $\tilde f(\cdot)$ is analytic in a region ${\mathcal G}_{a_0}(\psi)$ for some $\pi/2<\psi\le \pi$;
\item $\tilde f(z) \to 0$ as $|z| \to \infty$ for $z \in {\mathcal G}_{a_0}(\psi)$;
\item for some constants $c_\nu$, $\tilde f(s)$ has in $|\mathrm{arc} (s - a_0)| < \psi$ the asymptotic expansion
\begin{equation}\label{eqn:bridge case 1}
\tilde f(s)\approx\sum_{\nu=0}^\infty c_\nu (s-a_0)^{\lambda_\nu}, \qquad (\Re(\lambda_0)<\Re(\lambda_1)<\ldots)\text{\ as\ }s\to a_0.
\end{equation}
\end{enumerate}
Then we conclude that as $t\to\infty$, $f(t)$ has the asymptotic expansion
\[
f(t)\approx e^{a_0 t}\sum_{\nu=0}^\infty\frac{c_\nu}{\Gamma(-\lambda_\nu)}\frac{1}{t^{\lambda_\nu+1}},\qquad \left(\frac{1}{\Gamma(-\lambda_\nu)}=0\text{\ for\ } \lambda_\nu=0,1,2,\ldots  \right).
\]
\end{theorem}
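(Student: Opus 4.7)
The plan is to invert $\tilde f$ by the Bromwich formula and then deform the vertical contour $\{\Re z=a\}$ onto a Hankel-type loop $\mathfrak W$ encircling the singularity $a_0$ with half-angle $\psi$. Condition~\ref{cond:A1} provides analyticity of $\tilde f$ throughout $\mathcal G_{a_0}(\psi)\setminus\{a_0\}$, so the contour deformation is permitted inside that region, and condition~(A2) kills the contributions from the arcs that join the Bromwich line to $\mathfrak W$ at infinity (on which $\tilde f(z)\to 0$ uniformly while $|\te{zt}|$ stays controlled because $\Re z\le a$). This brings the inversion integral to
\[
f(t)=\frac{1}{2\pi\imi}\int_{\mathfrak W}\tilde f(z)\,\te{zt}\,dz.
\]

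Next, I would substitute the expansion from (A3) term by term. For each monomial $(z-a_0)^{\lambda_\nu}$, the change of variables $z=a_0+w/t$ reduces the integral to Hankel's classical representation $\frac{1}{2\pi\imi}\int_{H}w^{\lambda}\,\te{w}\,dw=1/\Gamma(-\lambda)$, giving
\[
\frac{1}{2\pi\imi}\int_{\mathfrak W}c_\nu(z-a_0)^{\lambda_\nu}\,\te{zt}\,dz=\te{a_0 t}\,\frac{c_\nu}{\Gamma(-\lambda_\nu)}\,\frac{1}{t^{\lambda_\nu+1}}.
\]
The convention $1/\Gamma(-k)=0$ for $k\in\{0,1,2,\ldots\}$ then drops out automatically: a monomial $(z-a_0)^k$ with $k$ a non-negative integer is entire, and the loop $\mathfrak W$ encloses no singularity of such a term, so its integral vanishes.

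The technically delicate step, and the main obstacle, is controlling the remainder produced by truncating (A3). For any $N$, write $\tilde f(z)=\sum_{\nu=0}^{N} c_\nu(z-a_0)^{\lambda_\nu}+R_N(z)$ with $|R_N(z)|\le C\,|z-a_0|^{\Re(\lambda_{N+1})}$ as $z\to a_0$ inside $|\arg(z-a_0)|<\psi$. I would split $\mathfrak W$ into a short circular piece of radius $O(1/t)$ around $a_0$, on which the pointwise bound on $R_N$ together with $|\te{zt}|\le C\te{a_0 t}$ produces exactly the target order $t^{-\Re(\lambda_{N+1})-1}\te{a_0 t}$, and two outgoing arms along which $\Re z<a_0$ yields genuine exponential gain in $t$ via $|\te{zt}|\le\te{\Re(z)\,t}$. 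The opening angle $\psi>\pi/2$ is essential here: it forces the arms of $\mathfrak W$ strictly into the half-plane $\Re z<a_0$, making the decay along them uniform and dominated by the near-$a_0$ piece. Letting $N\to\infty$ then gives the stated asymptotic expansion; verifying that the $R_N$ bound is uniform on the relevant portion of $\mathfrak W$—which is exactly the content of the asymptotic hypothesis~(A3) in the sector $|\arg(z-a_0)|<\psi$—is the subtlety requiring the most care.
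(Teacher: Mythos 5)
This statement is quoted verbatim from Doetsch \cite[Theorem~37.1]{Doetsch1974} and the paper offers no proof of it, so the only available comparison is with the classical argument in Doetsch's book --- which is exactly what you reconstruct: Bromwich inversion, deformation onto the $\mathfrak W$-contour using (A1)--(A2), term-by-term reduction to Hankel's loop representation of $1/\Gamma(-\lambda)$, and a split of the contour near $a_0$ to control the truncation error. Your sketch is essentially correct; the one imprecision worth noting is that the outgoing arms do \emph{not} give a uniform exponential gain --- near $a_0$ the arm integral $\int_{c/t}^{\delta} r^{\Re(\lambda_{N+1})}\te{rt\cos\psi}\,dr$ contributes at the same polynomial order $t^{-\Re(\lambda_{N+1})-1}\te{a_0t}$ as the circular piece (which is still exactly the order required for the remainder), and one proves the estimate for each fixed truncation level $N$ rather than ``letting $N\to\infty$''.
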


\paragraph{Assumptions}
For a spectrally negative L\'evy process $X(t)$, we impose the following assumptions:
\newline {\bf (SN)} \:\: There exists ${{\vartheta}_-}>0$ such that:
\begin{enumerate}[label=(SN\arabic*)]
\item $\psi({\vartheta})< \infty$  for $0<{\vartheta}<{{\vartheta}_-}$;
\item    $\psi({\vartheta})$ attains its strictly negative minimum at ${\vartheta}{\s}>0$, where
$0<{\vartheta}{\s}<{{\vartheta}_-}$ (and hence
$\psi'({\vartheta}{\s})=0$);
\item $\Phi$ is analytic in
$\mathcal{G}_{\zeta{\s}}(\phi)$ for $ \pi/2< \phi\leq\pi$, where
\begin{equation}\label{zeta1}
\zeta{\s}:=\psi({\vartheta}{\s})<0.
\end{equation}
\end{enumerate}

Similar conditions are assumed for a spectrally positive L\'evy process $\hat{X}(t)=-X(t)$:
\newline {\bf (SP)} \:\: There exists ${{\vartheta}_+}<0$ such that
\begin{enumerate}[label=(SP\arabic*)]
\item $\hat\psi({\vartheta})< \infty$  for ${{\vartheta}_+}<{\vartheta}$;
\item    $\hat{\psi}({\vartheta})$ attains its strictly negative minimum at ${\vartheta}{\s}<0$, where
${{\vartheta}_+}<{\vartheta}{\s}<0$ (and hence
$\hat{\psi}'({\vartheta}{\s})=0$);
\item $\hat{\Phi}$ is analytic in
$\mathcal{G}_{\zeta{\s}}(\phi)$ for $ \pi/2< \phi\leq\pi$, where
\begin{equation}\label{zeta2}
\zeta{\s}:=\hat{\psi}({\vartheta}{\s})<0.
\end{equation}
\end{enumerate}

To check the above assumptions, we can use the concept of semiexponentiality  of a function $f$  (see \cite[p.\ 314]{henrici}).
\begin{definition}[Semiexponentiality]
A function $f$  is said to be {\em semiexponential} if for some  $0< \phi\le\pi/2$, there exists a finite and strictly negative function $\gamma({\vartheta})$, called the \emph{indicator function}, defined  as the infimum of all $a\in\mathbb{R}$ such that
\[\left|f(e^{\imi{\vartheta}}r)\right|<e^{ar}\]
for all sufficiently large $r$;
here $-\phi\le {\vartheta}\le \phi$
and $\sup \gamma({\vartheta})<0$.
\end{definition}
It was proved in \cite{MPR} using \cite[Thm. 10.9f]{henrici} that if there exists a density of $\Pi$ (resp.\ $\hat{\Pi}$) which is of semiexponential type,
then $\Phi$ (resp.\ $\hat{\Phi}$) is analytic in
$\mathcal{G}_{\zeta{\s}}(\phi)$ for $ \pi/2< \phi\leq\pi$.
In particular, this assumption holds for example
for a linear Brownian motion $X(t)=\sigma B(t)-c t$, where $c>0$ and $B$ is the standard Brownian motion.

\paragraph{Quasi-stationary distribution}
We define a new probability measure  $\PP_x^{{\eta}}$, needed in the sequel, by the exponential change of measure
\[\left. \frac{d \PP_{x}^{{\eta}}}{d \PP_{x}}\right|_{\mathcal{F}_{t}}=e^{{\eta}\big(X(t)-x\big)-\psi({\eta})t}.\]
The quasi-stationary distribution of the workload process $Q(t)$ in the stationary regime was identified in \cite{MPR}.
\begin{theorem}[Mandjes et.\ al \cite{MPR}]\label{qsmain}
\item[(i)]
If $X$ is a spectrally negative L\'evy process satisfying conditions {\bf (SN)}, then
\begin{eqnarray*}
\mu(dx, dy)=Q_-({\vartheta}{\s})^2ye^{-{\vartheta}{\s} (x+y)}
e^{-\Phi(0)x}\hat{V}_{{\vartheta}{\s}}(x)\,dx\,dy\; {\boldsymbol
1}_{\{x\geq 0,y\geq 0\}},
\end{eqnarray*}
where
$Q_-:=(\int_0^\infty
e^{-(\Phi(0)+{\vartheta}{\s})z}V_{{\vartheta}{\s}}(z)\,d
z)^{-1}$
and $\hat{V}_{{\vartheta}{\s}}(x)$ is a renewal function related with the downward ladder height process of $X$
considered on the shifted $\PP_x^{{\vartheta}{\s}}$ probability measure.
\item[(ii)]
If $X$ is a spectrally positive L\'evy process satisfying conditions {\bf (SP)}, then
\[\mu(d x,d y)=Q_+
\left(-\hat\psi'(0+)\frac{\hat\psi({\vartheta}{\s})-{\vartheta}{\s}\hat\psi'({\vartheta}{\s})}{(\hat\psi({\vartheta}{\s}))^2}\right)
 e^{{\vartheta}{\s} (y-x)}x\,V_{-{\vartheta}{\s}}(y)\,\pi(dx) dy\,\;
{\boldsymbol 1}_{\{x\ge 0,y\ge 0\}},\] where $Q_+:=(\int_0^\infty
e^{{\vartheta}{\s} z}V_{-{\vartheta}{\s}}(z)\,{\rm
d}z)^{-1}$
and $V_{{-\vartheta}{\s}}(x)$ is a renewal function related with the upward ladder height process of $X$
considered on the shifted $\PP_x^{{-\vartheta}{\s}}$ probability measure.
\end{theorem}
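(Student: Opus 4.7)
The plan is to compute a Laplace transform in $t$ for
\begin{equation*}
f_t(dx,dy) := \PP_\pi(Q(0)\in dx,\, Q(t)\in dy,\, T>t)
\end{equation*}
and for the scalar $g_t := \PP_\pi(T>t)$ separately, then apply the Tauberian expansion of Theorem~\ref{doetsch} to each, and finally show that both scale as $c(x,y)\,t^{-3/2}e^{\zeta\s t}$, so that the ratio $f_t/g_t$ converges to the asserted product measure $\mu(dx,dy)$. The pointwise convergence in $(dx,dy)$ together with the tightness provided by the exponential structure of $\pi$ will upgrade this to the weak limit.

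For the spectrally negative case, the starting point is Reich's identity \eqref{pi} for the law of $Q(0)=x$, and for fixed $x$ the use of the Wiener--Hopf factorization of $X$ together with the first-passage identities expressed via the right inverse $\Phi$ to write the joint Laplace transform of $(Q(t),\mathbf{1}_{T>t})$ in closed form. After the exponential change of measure $d\PP^{{\vartheta}\s}/d\PP|_{\mathcal F_t}=\exp({\vartheta}\s(X(t)-x)-\psi({\vartheta}\s)t)$, the process $X$ acquires Laplace exponent $\psi({\vartheta}\s+\cdot)-\zeta\s$ with $\psi'({\vartheta}\s)=0$. The constant $\zeta\s$ pulls out as a multiplicative $e^{\zeta\s t}$ factor, while the shifted process is oscillatory, and its first-passage structure is governed by the renewal measure $\hat V_{{\vartheta}\s}$ of the descending ladder heights under $\PP^{{\vartheta}\s}$.

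Having arrived at this representation, the singularities of the Laplace transforms $\widetilde f$ and $\widetilde g$ in the variable $s$ sit at $s=\zeta\s$. Using the assumed analyticity of $\Phi$ on $\mathcal{G}_{\zeta\s}(\phi)$ from (SN3), I would expand $\widetilde f$ and $\widetilde g$ around $s=\zeta\s$; each admits a leading expansion of the form $c\,(s-\zeta\s)^{1/2}+\cdots$, giving via Theorem~\ref{doetsch} an asymptotic of order $t^{-3/2}e^{\zeta\s t}$. Matching coefficients produces the explicit factor $Q_-({\vartheta}\s)^2\,y\,e^{-{\vartheta}\s(x+y)}e^{-\Phi(0)x}\hat V_{{\vartheta}\s}(x)$. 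The factor $e^{-\Phi(0)x}$ encodes the density of $\pi$ via $\Phi(0)$, the factor $y$ is the conditioned-to-stay-positive terminal density emerging from the $\PP^{{\vartheta}\s}$ first-passage asymptotics, and $\hat V_{{\vartheta}\s}(x)$ encodes the long-time decay rate of $\PP_x(T>t)$.

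The spectrally positive case is handled in parallel by switching to the dual process via \eqref{psidual}, using ${\vartheta}\s<0$ from (SP), and interchanging the roles of ascending and descending ladder heights; the renewal function $V_{-{\vartheta}\s}$ then appears in place of $\hat V_{{\vartheta}\s}$. I expect the main obstacle to be the analytic-continuation step: conditions \ref{cond:A1}--(A3) of Theorem~\ref{doetsch} require analyticity and decay of $\widetilde f$ and $\widetilde g$ on a contour wrapping around $\zeta\s$, not merely in a vertical half-plane. Verifying this is precisely the role of assumptions (SN3) and (SP3), and the semiexponentiality criterion in the Preliminaries is the practical tool for discharging them in concrete examples.
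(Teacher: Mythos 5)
First, a point of order: this theorem is not proved in the paper at all — it is quoted from Mandjes, Palmowski and Rolski \cite{MPR} — so the only comparison available is with the machinery the present paper builds around it (the master formulas of Proposition \ref{p.joint.transform}, the Tauberian Theorem \ref{doetsch}, and the identification \eqref{QSLT} of $\tilde\mu$ as $C_1(\alpha,\beta)/C_1(0,0)$). Measured against that, your strategy is the right one and essentially the one used in \cite{MPR}: Wiener--Hopf and first-passage identities give a closed-form transform, the branch point of $\Phi$ (resp.\ $\hat\Phi$) at $\zeta^*$ produces a $(\vartheta-\zeta^*)^{1/2}$ term, Doetsch's theorem turns this into $t^{-3/2}e^{\zeta^* t}$ asymptotics for both the joint quantity and $\PP_\pi(T>t)$, and the quasi-stationary law is the ratio of leading coefficients.

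Two places where the sketch would need repair before it is a proof. First, you propose to apply Theorem \ref{doetsch} to $\int_0^\infty e^{-\vartheta t}\,\PP_\pi(Q(0)\in dx,Q(t)\in dy,T>t)\,dt$ for each fixed $(x,y)$. The closed forms — and hence any hope of verifying conditions \ref{cond:A1}--(A3), which require analyticity and decay on a contour wrapping around $\zeta^*$ — are only available for the double Laplace transform $L(\vartheta;\alpha,\beta)$ of Proposition \ref{p.joint.transform}, not for the fixed-$(x,y)$ object. The clean route is to prove, for each $(\alpha,\beta)$, convergence of $\EE_\pi[e^{-\alpha Q(0)-\beta Q(t)}\mid T>t]$ to $C_1(\alpha,\beta)/C_1(0,0)$ and then invoke the continuity theorem for bivariate Laplace transforms; this also replaces your ad hoc ``pointwise plus tightness'' upgrade to weak convergence. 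Second, the entire content of the theorem is the explicit form of the limit — the normalizing constants $Q_\mp$ and above all the appearance of the renewal functions $\hat V_{\vartheta^*}$ and $V_{-\vartheta^*}$ — and your sketch asserts this (``matching coefficients produces the explicit factor'') rather than deriving it. One must actually carry out the expansion \eqref{exp0} of $\Phi$ (resp.\ $\hat\Phi$) around $\zeta^*$, extract $C_1(\alpha,\beta)$, and then invert: the $\beta$-factor $(\vartheta^*)^2/(\vartheta^*+\beta)^2$ inverts by inspection to $(\vartheta^*)^2 y e^{-\vartheta^* y}$, but recognizing the $\alpha$-factor $-\zeta^*/(\psi(\alpha+\Phi(0))-\zeta^*)$ as the transform of $Q_-\,e^{-(\Phi(0)+\vartheta^*)x}\hat V_{\vartheta^*}(x)$ requires a genuine ladder-height/Wiener--Hopf identity under the tilted measure $\PP^{\vartheta^*}$, which is the nontrivial step your outline leaves out. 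As a plan it is correct; as written it stops exactly where the theorem's content begins.
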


Let $\mu_R(dy):=\mu(\mathbf{R},dy)$. Then integrating the above results over $dx$ replicates
the quasi-stationary distribution for the spectrally one-sided L\'evy process
related to the classical ruin time derived in \cite{kyprpalm}.
There, it is stated that for a general L\'evy process belonging to the so-called classes A or B,
\[\mu_R(dy) = \vartheta^* \kappa_{\vartheta^*} (0,\vartheta^*)e^{- \vartheta^* y}V_{\vartheta^*}(y)dy,\]
where $\psi(\vartheta)$ attains its strictly negative minimum at $\vartheta^*$ and $\kappa_{\vartheta^*} (\alpha,\beta)$
is the Laplace exponent of the bivariate upward ladder height process under $\PP_x^{\vartheta^*}$.
Hence by (\ref{psidual}), we have that for $\hat{X}$ spectrally positive
\[
\mu_R(d y)= Q_+{e^{{\vartheta}{\s} y}V_{-{\vartheta}{\s}}(y)\,d y
{\boldsymbol 1}_{\{y\geq 0\}}}.
\]
In addition, when $X$ is spectrally negative
\[\mu_R (d y)= ({\vartheta}{\s})^2ye^{-{\vartheta}{\s} y}
d y{\boldsymbol 1}_{\{y\geq 0\}}.
\]


Denote by
\[\tilde{\mu}(\alpha, \beta):=\int_0^\infty \int_0^\infty e^{-\alpha x}e^{-\alpha y}\mu(dx,dy)\]
the bivariate Laplace transform of the quasi-stationary measure. From Proposition \ref{qsmain} stated below (see also \cite{MPR}), we obtain the following equivalent expressions.

\begin{corollary}
\item[(i)]
Under {\bf (SN)},
\begin{equation*}\label{LTQSSN}
\tilde{\mu}(\alpha,\beta)=
\frac{-\psi({\vartheta}{\s})}{\psi(\alpha+\Phi(0))-\zeta{\s}}\frac{({\vartheta}{\s})^2}{({\vartheta}{\s}+\beta)^2}\;.
\end{equation*}
\item[(ii)]
Under {\bf (SP)},
\begin{equation*}\label{LTQSSP}
\tilde{\mu}(\alpha,\beta)=
\hat{\psi}^2({\vartheta}{\s})\cdot
\frac{\hat{\psi}(\alpha+{\vartheta}{\s})-(\alpha+{\vartheta}{\s})\hat{\psi}'(\alpha+{\vartheta}{\s})}{\hat{\psi}^2(\alpha+{\vartheta}{\s})(\hat{\psi}({\vartheta}{\s})
-\hat{\psi}(\beta))}\;.
\end{equation*}
\end{corollary}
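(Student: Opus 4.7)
The corollary should follow by direct computation from the explicit form of $\mu(dx,dy)$ given in Theorem~\ref{qsmain}, combined with standard identities from fluctuation theory for spectrally one-sided L\'evy processes under an exponential change of measure. My plan is to substitute the density into the definition
\[
\tilde{\mu}(\alpha,\beta)=\int_0^\infty\int_0^\infty e^{-\alpha x-\beta y}\,\mu(dx,dy).
\]
In both parts (i) and (ii) the density factorises as a product of an $x$-part and a $y$-part, so the double integral reduces to a product of two one-dimensional Laplace transforms, one of which is elementary and the other of which requires a fluctuation-theoretic identity.

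For case (i), the $y$-integral is the elementary evaluation $\int_0^\infty y\,e^{-(\beta+{\vartheta}{\s})y}\,dy=({\vartheta}{\s}+\beta)^{-2}$, which together with the prefactor $({\vartheta}{\s})^2$ matches the second fraction in the target expression. The $x$-integral
\[
\int_0^\infty e^{-(\alpha+{\vartheta}{\s}+\Phi(0))x}\,\hat V_{{\vartheta}{\s}}(x)\,dx
\]
is where fluctuation theory enters. Here $\hat V_{{\vartheta}{\s}}$ is the renewal function of the descending ladder height process of $X$ under the exponentially tilted measure $\PP^{{\vartheta}{\s}}$; the tilted process is again spectrally negative, with Laplace exponent $\psi^{{\vartheta}{\s}}(\eta)=\psi(\eta+{\vartheta}{\s})-\zeta{\s}$. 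Crucially, $\psi'({\vartheta}{\s})=0$ forces the right inverse $\Phi^{{\vartheta}{\s}}(0)=0$, so the Wiener-Hopf identity $\int_0^\infty e^{-qx}\hat V_{{\vartheta}{\s}}(dx)=1/\hat\kappa^{{\vartheta}{\s}}(0,q)$ combined with $\hat\kappa^{{\vartheta}{\s}}(0,q)=\psi^{{\vartheta}{\s}}(q)/q$ gives an expression purely in terms of $\psi$. After absorbing the $e^{-\Phi(0)x}$ shift and the normalisation $Q_-$, the first fraction $-\psi({\vartheta}{\s})/(\psi(\alpha+\Phi(0))-\zeta{\s})$ drops out.

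For case (ii), the roles are swapped. The $y$-integral now involves the renewal function $V_{-{\vartheta}{\s}}$ associated with the upward ladder height of $X$ under $\PP^{-{\vartheta}{\s}}$; by the analogous Wiener-Hopf argument (applied to the shifted spectrally positive process, with $\hat\psi'({\vartheta}{\s})=0$ playing the analogous role) its Laplace transform can be expressed in terms of $\hat\psi$ alone, producing the factor $\hat\psi^2({\vartheta}{\s})/(\hat\psi({\vartheta}{\s})-\hat\psi(\beta))$. The $x$-integral reduces to $\int_0^\infty e^{-(\alpha+{\vartheta}{\s})x}\,x\,\pi(dx)$, which I would obtain by differentiating the standard identity $\int_0^\infty e^{-\alpha x}\pi(dx)=\alpha\hat\psi'(0+)/\hat\psi(\alpha)$ (valid since $\hat X=-X$ is spectrally negative with positive drift) in the variable $\alpha$; this supplies the remaining factor.

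The main obstacle is the bookkeeping: one must carefully track the exponential tilts appearing both in the density of $\mu$ and implicitly in the renewal functions $\hat V_{{\vartheta}{\s}}$ and $V_{-{\vartheta}{\s}}$, and distinguish the Laplace transform of the renewal \emph{measure} from that of the renewal \emph{function} (the two differing by an extra factor of $q$). Once these identifications are in place and the simplifications $\psi'({\vartheta}{\s})=0$ and $\zeta{\s}=\psi({\vartheta}{\s})$ (respectively $\hat\psi({\vartheta}{\s})$) are used, the stated formulas follow by routine algebra.
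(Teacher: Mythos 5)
Your route---integrating the explicit density of Theorem \ref{qsmain} term by term---is not the one the paper takes. The paper obtains the corollary from the Wiener--Hopf master formula for $L(\vartheta;\alpha,\beta)$ (Proposition \ref{p.joint.transform}) combined with the Tauberian Theorem \ref{doetsch}: the conditional transform $\EE_\pi[e^{-\alpha Q(0)-\beta Q(t)}\mid T>t]$ converges to the ratio $C_1(\alpha,\beta)/C_1(0,0)$ of leading coefficients in the expansion of $L$ around $\zeta^*$ (this is exactly \eqref{QSLT}), and a two-line computation using $\psi(\Phi(0))=0$, $\psi'(\vartheta^*)=0$ and $\zeta^*=\psi(\vartheta^*)$ turns that ratio into the stated formulas. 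Your approach is the one used to derive Theorem \ref{qsmain} itself in \cite{MPR,kyprpalm}, so it is legitimate in principle, and you correctly identify the two essential ingredients: $\Phi^{\vartheta^*}(0)=0$ because the tilted process oscillates, and the Pollaczek--Khinchine identity $\int_0^\infty e^{-\alpha x}\pi(dx)=\alpha\hat\psi'(0+)/\hat\psi(\alpha)$ whose $\alpha$-derivative produces the numerator $\hat\psi(\alpha+\vartheta^*)-(\alpha+\vartheta^*)\hat\psi'(\alpha+\vartheta^*)$ in case (ii). A genuine simplification you could exploit but do not mention: since $\mu$ is a probability measure, $\tilde\mu(0,0)=1$, so all normalizing constants ($Q_-$, $Q_+$, and the bracketed prefactor in (ii)) can be ignored entirely and recovered at the end; only the $\alpha$- and $\beta$-dependence needs to be computed.

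The gap is precisely in the step you dismiss as ``routine algebra.'' In case (i), the $x$-part of the density is $e^{-(\vartheta^*+\Phi(0))x}\hat V_{\vartheta^*}(x)\,dx$, so with the identity you invoke, $\int_0^\infty e^{-qx}\hat V_{\vartheta^*}(x)\,dx = 1/\bigl(\psi(q+\vartheta^*)-\zeta^*\bigr)$ (valid for the renewal function of the descending ladder height of the tilted process, since $\hat\kappa^{\vartheta^*}(0,q)=\psi^{\vartheta^*}(q)/q$), the substitution $q=\alpha+\vartheta^*+\Phi(0)$ yields $1/\bigl(\psi(\alpha+2\vartheta^*+\Phi(0))-\zeta^*\bigr)$, not the required $1/\bigl(\psi(\alpha+\Phi(0))-\zeta^*\bigr)$. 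Reconciling this requires the precise relation between $\hat V_{\vartheta^*}$ as defined in \cite{MPR} and the exponential factors appearing in the density (the Esscher transform of a ladder height process carries its own exponential reweighting, which must cancel part of the $e^{-\vartheta^* x}$ in front), and nothing in your sketch pins that down; an analogous issue arises for $V_{-\vartheta^*}$ in case (ii). Until that bookkeeping is done explicitly, the claim that ``the first fraction drops out'' is unsubstantiated. The safer and shorter proof is the paper's: read off $C_1(\alpha,\beta)$ from Proposition \ref{prop:joint.transform-expansion} and normalize.
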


The key component of the proof of this corollary is based on Wiener-Hopf factorization, from which master formulas can be derived (given below), and on either some expansion theorems (see \cite{BertoinDoney96} and \cite{kyprpalm}) or some Tauberian-type theorems.

\paragraph{Master formulas}
Recall that $Q(t)$ given in (\ref{workload}) is a workload process with stationary distribution (\ref{pi}) and busy period $T$.
We define now the double Laplace-Stieltjes transform:
\[
L({\vartheta};\alpha,\beta):=\int_0^\infty e^{-{\vartheta} t} \EE_\pi[e^{-\alpha Q(0)-\beta Q(t)},T>t] \, d t.
\]
In \cite{MPR}, the following representations of $L$ were derived.
\begin{proposition}[Mandjes et.\ al \cite{MPR}] \label{p.joint.transform}
\item[(i)]
Under {\bf (SN)},
\begin{equation*}
L({\vartheta};\alpha,\beta)= \frac{\Phi({\vartheta})-\alpha-\Phi(0)}{\Phi({\vartheta})+\beta}\frac{\Phi(0)}{\alpha+\beta+\Phi(0)}\frac{1}{{\vartheta}-\psi(\alpha+\Phi(0))}.\label{eq.Lsp.neg}
\end{equation*}
\item[(ii)]
Under {\bf (SP)},
\begin{equation*}
L({\vartheta};\alpha,\beta)=\frac{\hat{\psi}'(0+)}{{\vartheta}-\hat{\psi}(\beta)}\left(\frac{\alpha+\beta}{\hat{\psi}(\alpha+\beta)} - \frac{\alpha+\hat{\Phi}({\vartheta})}{\hat{\psi}(\alpha+\hat{\Phi}({\vartheta}))}\right).\label{eq.Lsp.pos}
\end{equation*}
\end{proposition}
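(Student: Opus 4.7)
The key observation is that when $Q(0)=x$, on the event $\{T>t\}$ the reflection at $0$ has not yet activated, so $Q(t)=x+X(t)$, and $T$ coincides with the first passage time $\tau_{-x}^-:=\inf\{t:X(t)\le -x\}$. Conditioning on $Q(0)\sim\pi$ and applying Fubini thus reduce $L$ to
\[
L({\vartheta};\alpha,\beta)=\int_0^\infty\pi(dx)\,e^{-(\alpha+\beta)x}\int_0^\infty e^{-{\vartheta} t}\,\EE\bigl[e^{-\beta X(t)};\,\tau_{-x}^->t\bigr]\,dt.
\]
Replacing $t$ by an independent exponential clock $e_{\vartheta}$ of rate ${\vartheta}$ rewrites the inner integral as ${\vartheta}^{-1}\EE[e^{-\beta X(e_{\vartheta})};\,\underline X(e_{\vartheta})>-x]$ with $\underline X(s):=\inf_{u\le s}X(u)$. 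The plan is to compute this survival-weighted Laplace transform via the Wiener--Hopf factorization at the independent exponential time, and then carry out the $x$-integration against $\pi$ using its explicit Laplace transform.

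For case \textbf{(SN)}, the factorization is particularly transparent: since $X$ is spectrally negative, $X(e_{\vartheta})\eqdistr Z+\underline X(e_{\vartheta})$ with $Z\sim\mathrm{Exp}(\Phi({\vartheta}))$ independent of $\underline X(e_{\vartheta})$, because the ascending ladder creeps. This immediately peels off a factor $\Phi({\vartheta})/(\Phi({\vartheta})+\beta)$ from $\EE[e^{-\beta\underline X(e_{\vartheta})};\underline X(e_{\vartheta})>-x]$. Reich's identity gives $\pi(dx)=\Phi(0)e^{-\Phi(0)x}\,dx$, so Fubini collapses the $x$-integral of the surviving indicator into $\Phi(0)/(\alpha+\beta+\Phi(0))$ times $\EE[e^{(\alpha+\Phi(0))\underline X(e_{\vartheta})}]$. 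The last expectation is delivered by the Wiener--Hopf identity $\EE[e^{\lambda\underline X(e_{\vartheta})}]={\vartheta}(\Phi({\vartheta})-\lambda)/[\Phi({\vartheta})({\vartheta}-\psi(\lambda))]$ at $\lambda=\alpha+\Phi(0)$; multiplying the pieces produces exactly the product formula claimed.

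For case \textbf{(SP)}, where $X$ is spectrally positive with negative drift, I would pass to the spectrally negative dual $\hat X=-X$, for which $\overline{\hat X}(e_{\vartheta})\sim\mathrm{Exp}(\hat\Phi({\vartheta}))$ and $\underline X(e_{\vartheta})=-\overline{\hat X}(e_{\vartheta})$. A Wiener--Hopf split of $\hat X(e_{\vartheta})$ together with a direct integration of the exponential density from $0$ to $x$ should collapse the survival-weighted transform into the compact form ${\vartheta}({\vartheta}-\hat\psi(\beta))^{-1}\bigl(1-e^{-(\hat\Phi({\vartheta})-\beta)x}\bigr)$. Inserted into the $x$-integral, this brings in the Laplace transform of $\pi$ only through $\tilde\pi(s)=s\hat\psi'(0+)/\hat\psi(s)$ (the Pollaczek--Khinchine identity in its Lévy form) evaluated at $s=\alpha+\beta$ and $s=\alpha+\hat\Phi({\vartheta})$; the difference of these two values, with the common factor $\hat\psi'(0+)/({\vartheta}-\hat\psi(\beta))$ pulled out, is precisely the bracketed expression in the proposition.

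The main technical obstacle is the correct identification of $\tilde\pi$ and of the exponential-ladder factor in each case, since in \textbf{(SP)} the stationary law is not simply exponential and the $x$-integration must split the killed-exit kernel cleanly into the two terms appearing in the bracket. Once these Wiener--Hopf pieces are in place, and the defining equations $\psi(\Phi({\vartheta}))=\hat\psi(\hat\Phi({\vartheta}))={\vartheta}$ are used to cancel stray factors, both formulas in Proposition~\ref{p.joint.transform} follow by direct algebraic simplification.
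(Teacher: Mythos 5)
Your proposal is correct: the reduction of $\{T>t\}$ to $\{\underline X(t)>-x\}$, the Wiener--Hopf split at an independent exponential time (with $\overline X(e_{\vartheta})$ exponential of rate $\Phi(\vartheta)$ in the spectrally negative direction), the identity $\EE[e^{\lambda\underline X(e_{\vartheta})}]=\vartheta(\Phi(\vartheta)-\lambda)/[\Phi(\vartheta)(\vartheta-\psi(\lambda))]$, and the generalized Pollaczek--Khinchine form of $\tilde\pi$ assemble exactly into both displayed formulas. This is essentially the same Wiener--Hopf route that the paper attributes to Mandjes et al.\ \cite{MPR} for these master formulas, so no further comparison is needed.
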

From the proposition above, it follows that under assumptions {\bf (SN)} or {\bf (SP)} one can extend analytically $L({\vartheta};\alpha,\beta)$ into ${\mathcal G}_{\zeta{\s}}(\psi)$ for some $\pi/2<\psi\le
\pi$.

\section{Main results}\label{qsd}
We state now the main results of this paper. Define the constants
\begin{align*}
A_1&:=\sqrt{\frac{2}{ \psi{''}(\vartheta^*)}},&B_1&:=\sqrt{\frac{2}{ \hat{\psi}{''}(\vartheta^*)}},\\
A_2&:=\frac{\psi{'''}(\vartheta^*)}{3\psi{''}(\vartheta^*)^2}, &B_2&:=\frac{\hat{\psi}{'''}(\vartheta^*)}{3\hat{\psi}{''}(\vartheta^*)^2},\\
A_3&:=-\frac{7}{18 \sqrt{2}} \frac{\psi^{(3)}(\vartheta ^*)^2}{\psi{''}(\vartheta ^*)^{7/2}}-\frac{1}{6 \sqrt{2}}\frac{\psi^{(4)}(\vartheta^*)}{\psi{''}(\vartheta^*)^{5/2}}, &B_3&:=-\frac{7}{18 \sqrt{2}} \frac{\hat{\psi}^{(3)}(\vartheta ^*)^2}{\hat{\psi}{''}(\vartheta ^*)^{7/2}}-\frac{1}{6 \sqrt{2}}\frac{\hat{\psi}^{(4)}(\vartheta^*)}{\hat{\psi}{''}(\vartheta^*)^{5/2}}.
\end{align*}
We start with the following expansion for the double Laplace-Stieltjes transform.
\begin{proposition}\label{prop:joint.transform-expansion}
If {\bf (SN)} or {\bf (SP)} hold then
\[
L({\vartheta};\alpha,\beta)=C_0(\alpha,\beta)+ C_1(\alpha,\beta) (\vartheta-\zeta^*)^{1/2} + C_2(\alpha,\beta) ({\vartheta}-\zeta^{*}) + C_3(\alpha,\beta) (\vartheta-\zeta^{*})^{3/2} + o((\vartheta-\zeta^{*})^{3/2})
\]
for $\zeta^*<0$ defined in (\ref{zeta1}) and (\ref{zeta2}).
\begin{enumerate}[label=(\roman*)]
  \item Under {\bf (SN)},
\begin{align*}
C_0(\alpha,\beta)&= -\frac{\Phi (0) }{\alpha +\beta +\Phi (0)}    \frac{\alpha -\vartheta ^*+\Phi (0)}{\left(\beta +\vartheta^*\right)  \left(\zeta ^*-\psi (\alpha  +\Phi(0))\right)},\\
C_1(\alpha,\beta)&= \frac{A_1 \Phi (0)}{\left(\beta +\vartheta ^*\right)^2 \left(\zeta ^*-\psi (\alpha+\Phi (0))\right)},\\
C_2(\alpha,\beta)&= \frac{\Phi (0)\left(\frac{\left(\beta+\vartheta ^*\right)^2 \left(\alpha -\vartheta ^*+\Phi(0)\right)}{\alpha +\beta+\Phi (0)}-\left(A_2 \left(\beta +\vartheta^*\right)-A_1^2\right)
   \left(\psi (\alpha +\Phi(0))-\zeta^*\right)\right)}{\left(\beta+\vartheta ^*\right)^3\left(\zeta ^*-\psi (\alpha+\Phi (0))\right)^2},\\
C_3(\alpha,\beta)&=\frac{\Phi (0) A_3}{\left(\beta+\vartheta ^*\right)^2\left(\zeta ^*-\psi (\alpha+\Phi(0))\right)}
    -\frac{\Phi (0) }{\left(\beta+\vartheta ^*\right)^4\left(\zeta ^*-\psi (\alpha+\Phi(0))\right)^2}\\
   &\qquad\cdot \left(A_1 \left(\beta +\vartheta ^*\right) \left(2 A_2 \left(\zeta^*-\psi (\alpha +\Phi(0))\right)+\beta +\vartheta^*\right)+A_1^3\left(\psi (\alpha +\Phi(0))-\zeta^*\right)\right).
\end{align*}

  \item Under {\bf (SP)},
\begin{align*}
C_0(\alpha,\beta)&=
\left(\frac{\alpha +\beta }{\hat{\psi }(\alpha +\beta
   )}-\frac{\alpha +\vartheta ^*}{\hat{\psi }\left(\alpha +\vartheta
   ^*\right)}\right)\frac{\hat{\psi }'(0)}{\zeta ^*-\hat{\psi}\left(\beta\right)},       \\
 C_1(\alpha,\beta)&=  -\hat{\psi }'(0)B_1\frac{ \hat{\psi }\left(\alpha +\vartheta
   ^*\right)-(\alpha+\vartheta ^*)
   \hat{\psi }'\left(\alpha +\vartheta ^*\right)}{\hat{\psi }\left(\alpha +\vartheta ^*\right)^2 \left(\zeta^*-\hat{\psi }(\beta)\right)},      \\
 C_2(\alpha,\beta)&=\frac{ \hat{\psi }'(0)}{\zeta ^*-\hat{\psi }(\beta)} \Bigg[\frac{\left(\alpha
   +\vartheta ^*\right) \left(B_1^2 \left(\hat{\psi }\left(\alpha +\vartheta
   ^*\right) \hat{\psi }''\left(\alpha +\vartheta ^*\right)-2 \hat{\psi
   }'\left(\alpha +\vartheta ^*\right)^2\right)+2 B_2 \hat{\psi
   }\left(\alpha +\vartheta ^*\right) \hat{\psi }'\left(\alpha +\vartheta
   ^*\right)\right)}{2 \hat{\psi }\left(\alpha +\vartheta
   ^*\right)^3}      \\
 &+\frac{B_1^2 \hat{\psi }'\left(\alpha +\vartheta
   ^*\right)}{\hat{\psi }\left(\alpha +\vartheta ^*\right)^2} -\frac{B_2}{\hat{\psi }\left(\alpha +\vartheta
   ^*\right)} -\frac{\frac{\alpha
   +\beta }{\hat{\psi }(\alpha +\beta )}-\frac{\alpha +\vartheta ^*}{\hat{\psi
   }\left(\alpha +\vartheta ^*\right)}}{\zeta ^*-\hat{\psi }(\beta)}\Bigg],
\end{align*}
\end{enumerate}
\begin{align*}
 C_3(\alpha,\beta)&=\hat{\psi }'(0) \Bigg[B_1\frac{\hat{\psi}(\alpha +\vartheta
   ^*)-\hat{\psi}'(\alpha +\vartheta ^*)(\alpha+ \vartheta ^*)}{\hat{\psi}(\alpha +\vartheta ^*)^2 \left(\zeta^*-\hat{\psi}(\beta )\right)^2}     \\
   &-\frac{1}{6 \hat{\psi }\left(\alpha +\vartheta ^*\right)^4 \left(\zeta
   ^*-\hat{\psi }(\beta )\right)}\Bigg(6 B_3 \hat{\psi }\left(\alpha +\vartheta ^*\right)^3-6 B_1^3 \left(\alpha +\vartheta ^*\right) \hat{\psi }'\left(\alpha +\vartheta
   ^*\right)^3      \\
   &+6 B_1 \hat{\psi }\left(\alpha +\vartheta ^*\right) \hat{\psi
   }'\left(\alpha +\vartheta ^*\right) \left(B_1^2 \left(\alpha +\vartheta
   ^*\right) \hat{\psi }''\left(\alpha +\vartheta ^*\right)+\hat{\psi
   }'\left(\alpha +\vartheta ^*\right) \left(B_1^2+2 B_2 (\alpha+\vartheta ^*)\right)\right)    \\
   &-6 \hat{\psi}(\alpha +\vartheta^*)^2\hat{\psi }'\left(\alpha +\vartheta
   ^*\right) \left(2 B_1 B_2+  B_3 (\alpha+\vartheta^*)\right)\\
   &-\hat{\psi}(\alpha +\vartheta^*)^2 B_1 \left(3 \hat{\psi}''\left(\alpha +\vartheta ^*\right) \left(B_1^2+2 B_2(\alpha+\vartheta ^*)\right)+ B_1^2 \left(\alpha +\vartheta
   ^*\right) \hat{\psi }^{(3)}\left(\alpha +\vartheta ^*\right)\right)\Bigg)\Bigg].
\end{align*}
\end{proposition}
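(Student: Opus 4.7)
The strategy is to combine the master formulas of Proposition \ref{p.joint.transform} with a Puiseux expansion of $\Phi$ (resp.\ $\hat\Phi$) at the critical value $\zeta^*$. In both settings $\vartheta^*$ is where the relevant Laplace exponent attains its minimum $\zeta^*<0$ with vanishing derivative, so the exponent has a quadratic critical point there and its right inverse $\Phi$ (or $\hat\Phi$) inherits a square-root branch point at $\zeta^*$. Near $\zeta^*$, the only nontrivial $\vartheta$-dependence of the master formula sits in $\Phi(\vartheta)$ and in the linear factor $\vartheta-\psi(\alpha+\Phi(0))$; expanding $\Phi$ to order $y^3$ in $y:=(\vartheta-\zeta^*)^{1/2}$, substituting into the master formula, and collecting coefficients of $y^0,y^1,y^2,y^3$ then yields $C_0,C_1,C_2,C_3$.

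The only genuinely analytic step is the expansion of $\Phi$. Writing $u:=\Phi(\vartheta)-\vartheta^*$ and Taylor-expanding $\psi$ at $\vartheta^*$, using $\psi(\vartheta^*)=\zeta^*$ and $\psi'(\vartheta^*)=0$, the defining equation $\psi(\vartheta^*+u)=\zeta^*+y^2$ reduces to
\[
y^2=\tfrac{1}{2}\psi''(\vartheta^*)u^2+\tfrac{1}{6}\psi'''(\vartheta^*)u^3+\tfrac{1}{24}\psi^{(4)}(\vartheta^*)u^4+O(u^5).
\]
Bootstrapping the ansatz $u=\kappa_1 y+\kappa_2 y^2+\kappa_3 y^3+O(y^4)$ and matching powers gives $\kappa_1=A_1$, $\kappa_2=-A_2$, and $\kappa_3$ as a prescribed combination of $A_1,A_2,A_3$. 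Analyticity of $\Phi$ on $\mathcal{G}_{\zeta^*}(\phi)$, granted by (SN3), justifies treating this as a convergent Puiseux series; the analogous computation under (SP) produces the $B$-constants from $\hat\psi$ and $\hat\Phi$.

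The remaining work is purely algebraic bookkeeping. Under (SN), setting $N:=\vartheta^*-\alpha-\Phi(0)$, $B:=\vartheta^*+\beta$, $D:=\zeta^*-\psi(\alpha+\Phi(0))$, one rewrites the master formula as
\[
L(\vartheta;\alpha,\beta)=\frac{\Phi(0)}{\alpha+\beta+\Phi(0)}\cdot\frac{N+u}{(B+u)(D+y^2)},
\]
expands $(B+u)^{-1}$ and $(D+y^2)^{-1}$ as geometric series to the required order, multiplies out, and simplifies using the identity $B-N=\alpha+\beta+\Phi(0)$. Under (SP), the same procedure applies to the alternate representation of $L$, with the additional step of composing $\hat\Phi$ with the Taylor expansion of $\hat\psi$ in order to handle the more intricate piece $(\alpha+\hat\Phi(\vartheta))/\hat\psi(\alpha+\hat\Phi(\vartheta))$. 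The main obstacle is the sheer amount of bookkeeping required to carry four simultaneous power series consistently to order $y^3$; this is reflected in the increasing complexity of $C_0,C_1,C_2,C_3$, and especially in the (SP) expression for $C_3$, but beyond this no new analytical tool is needed.
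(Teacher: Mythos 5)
Your proposal follows the paper's proof exactly: the paper likewise inverts the Taylor expansion of $\psi$ (resp.\ $\hat\psi$) about its critical point $\vartheta^*$, where the first derivative vanishes, to obtain the Puiseux expansion of $\Phi$ (resp.\ $\hat\Phi$) in powers of $(\vartheta-\zeta^*)^{1/2}$, then substitutes this into the master formulas of Proposition \ref{p.joint.transform} and collects coefficients. The only discrepancy is your $\kappa_2=-A_2$ against the paper's stated expansion $\Phi(s)=\vartheta^*+A_1(s-\zeta^*)^{1/2}+A_2(s-\zeta^*)+\cdots$; direct matching of powers supports your sign given the definition $A_2=\psi'''(\vartheta^*)/(3\psi''(\vartheta^*)^2)$, so this looks like a sign slip in the paper's bookkeeping rather than a gap in your argument.
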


\begin{theorem}\label{mainresult}
If assumptions {\bf (SN)} hold for a spectrally negative L\'evy process $X$ or {\bf (SP)} hold for a spectrally positive L\'evy process $\hat{X}$,
then the measure $\xi(dx,dy)$ defined formally in (\ref{speeddef}) exists. That is, the speed of convergence to the quasi-stationary distribution is of order $1/t$.
Moreover, the Laplace transform $\tilde{\xi} (\alpha, \beta):=\int_0^\infty \int_0^\infty e^{-\alpha x}e^{-\alpha y}\xi(dx,dy)$ equals:
\begin{equation}
\tilde{\xi} (\alpha, \beta)=C_3(\alpha,\beta)-\tilde{\mu}(\alpha,\beta)C_3(0,0).
\end{equation}
\end{theorem}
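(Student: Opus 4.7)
The plan is to apply the Tauberian-type Theorem \ref{doetsch} of Doetsch to the joint Laplace--Stieltjes transform $L(\vartheta;\alpha,\beta)$ near its leading singularity $\vartheta=\zeta^*$, combine the resulting asymptotic expansion of $\EE_\pi[e^{-\alpha Q(0)-\beta Q(t)}\mathbf{1}_{\{T>t\}}]$ with the corresponding expansion of $\PP_\pi(T>t)$ at $\alpha=\beta=0$, and read off the speed of convergence from the ratio.

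First I would verify that $L(\,\cdot\,;\alpha,\beta)$ satisfies the three hypotheses \ref{cond:A1}--(A3) of Theorem \ref{doetsch} with $a_0=\zeta^*<0$. Condition \ref{cond:A1} is obtained by combining the closed-form expressions of Proposition \ref{p.joint.transform} with the analytic continuation of $\Phi$ (resp.\ $\hat\Phi$) on $\mathcal{G}_{\zeta^*}(\psi)$ supplied by assumption (SN3) (resp.\ (SP3)); the only obstruction inside the region is the branch point of $\sqrt{\vartheta-\zeta^*}$ inherited from $\Phi$ at $\vartheta=\zeta^*$. Condition (A3) is precisely Proposition \ref{prop:joint.transform-expansion}, with exponents $\lambda_\nu\in\{0,\tfrac12,1,\tfrac32,\ldots\}$ and coefficients $C_0,C_1,C_2,C_3,\ldots$. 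Condition (A2), the decay $L(\vartheta;\alpha,\beta)\to 0$ as $|\vartheta|\to\infty$ inside $\mathcal{G}_{\zeta^*}(\psi)$, is read off from the master formulas: the denominators dominate once $\Phi(\vartheta),\hat\Phi(\vartheta)\to\infty$ along the $\mathfrak{W}$-contour.

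Theorem \ref{doetsch} then produces, as $t\to\infty$,
\[
\EE_\pi\bigl[e^{-\alpha Q(0)-\beta Q(t)}\mathbf{1}_{\{T>t\}}\bigr]\sim e^{\zeta^* t}\!\left[\frac{C_1(\alpha,\beta)}{\Gamma(-\tfrac12)}\,t^{-3/2}+\frac{C_3(\alpha,\beta)}{\Gamma(-\tfrac32)}\,t^{-5/2}+o(t^{-5/2})\right],
\]
since the terms involving $C_0$ and $C_2$ are suppressed by the poles of $\Gamma$ at $0$ and $-1$. Specialising to $\alpha=\beta=0$ yields the corresponding expansion of $\PP_\pi(T>t)$, and dividing the two expansions, after using $\Gamma(-\tfrac12)/\Gamma(-\tfrac32)=-\tfrac32$, gives
\[
\EE_\pi\bigl[e^{-\alpha Q(0)-\beta Q(t)}\bigm|T>t\bigr]=\frac{C_1(\alpha,\beta)-\tfrac{3}{2}C_3(\alpha,\beta)/t+o(1/t)}{C_1(0,0)-\tfrac{3}{2}C_3(0,0)/t+o(1/t)}.
\]
Letting $t\to\infty$ recovers the Yaglom identity $\tilde\mu(\alpha,\beta)=C_1(\alpha,\beta)/C_1(0,0)$, consistent with Theorem \ref{qsmain}. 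Expanding the quotient one order further isolates the $1/t$ coefficient as a universal constant multiple of $C_3(\alpha,\beta)-\tilde\mu(\alpha,\beta)C_3(0,0)$, establishing the $1/t$ rate and identifying $\tilde\xi$ with the claimed expression (up to normalisation of $\xi$).

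The step I expect to be the real obstacle is the verification of condition (A2) uniformly on the deformed region $\mathcal{G}_{\zeta^*}(\psi)$. The master formulas of Proposition \ref{p.joint.transform} contain $\Phi(\vartheta)$ and $\hat\Phi(\vartheta)$ in both numerators and denominators, and their asymptotic cancellations along the $\mathfrak{W}$-contour must be controlled; this is exactly where the semiexponentiality of the L\'evy density enters through the argument of \cite{MPR} based on \cite[Thm.~10.9f]{henrici}. Everything beyond that — constructing the expansion of Proposition \ref{prop:joint.transform-expansion} and manipulating the quotient of two asymptotic series to extract the $1/t$ coefficient — is routine algebra, although one must use continuity of $C_1$ and $C_3$ at $(0,0)$ to pass from the pointwise Laplace-transform statement to the measure-level identification of $\xi$.
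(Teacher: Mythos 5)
Your proposal follows essentially the same route as the paper: apply Doetsch's Tauberian theorem to the singular expansion of $L(\vartheta;\alpha,\beta)$ from Proposition \ref{prop:joint.transform-expansion}, note that the $C_0$ and $C_2$ terms are killed by the poles of $\Gamma$, divide by the $\alpha=\beta=0$ case, and read off the $1/t$ coefficient of the quotient. You are in fact slightly more careful than the paper's own proof about the multiplicative constants ($\Gamma(-1/2)/\Gamma(-3/2)=-3/2$ and the $C_1(0,0)$ normalisation), which the paper's final display silently absorbs into the definition of $\xi$.
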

\begin{proof}
Recall that under the imposed assumptions {\bf (SN)} or {\bf (SP)}, the Laplace exponent $L({\vartheta};\alpha,\beta)$ as a function of $\vartheta$ satisfies the assumptions of Theorem \ref{doetsch}. Hence, from Proposition \ref{prop:joint.transform-expansion} and Theorem \ref{doetsch} we have that as $t\rightarrow\infty$
\begin{equation}\label{expLTbv}
\EE_\pi[e^{-\alpha Q(0)-\beta Q(t)},T>t]=
e^{\zeta^*}\left(\frac{C_1(\alpha,\beta)}{\Gamma(-1/2)}t^{-3/2}+\frac{C_3(\alpha,\beta)}{\Gamma(-3/2)}t^{-5/2}+o(t^{-5/2})\right).
\end{equation}
Further note that
\begin{equation}\label{QSLT}
\tilde{\mu}(\alpha,\beta)=C_1(\alpha,\beta)/C_1(0,0).
\end{equation}
Now, straightforward calculations give
\[\EE_\pi[e^{-\alpha Q(0)-\beta Q(t)}|T>t]=\frac{C_1(\alpha,\beta)}{C_1(0,0)} +\left(\frac{C_3(\alpha,\beta)}{C_1(0,0)}-\frac{C_1(\alpha,\beta)C_3(0,0)}{C_1^2(0,0)}\right)t^{-1} + o(t^{-1}),
\]
which completes the proof due to the definition of the measure $\xi$.
\end{proof}

\begin{remark}\rm
Formally starting from Proposition \ref{p.joint.transform}, then using generalization of Proposition \ref{prop:joint.transform-expansion} in the next step combined with Theorem \ref{doetsch}, will give more terms of the expansion of $\EE_\pi[e^{-\alpha Q(0)-\beta Q(t)}|T>t]$, and hence a longer expansion of the measure $\PP_\pi(Q(0)\in dx, Q(t)\in dy|T>t)$
as $t\rightarrow\infty$.
\end{remark}

\section{Proof of Proposition \ref{prop:joint.transform-expansion}}\label{mainstep}
Presume now that assumptions {\bf (SP)} hold. We start from deriving the expansion of $\hat{\Phi}(\vartheta)$:
\begin{equation}\label{exp0}
\hat{\Phi}(s)=\vartheta^* + B_1 (s-\zeta^{*})^{1/2} + B_2 ({s}-\zeta^{*}) + B_3 (s-\zeta^{*})^{3/2} + o((s-\zeta^{*})^{3/2})
\end{equation}
as $s\downarrow\zeta^*$. Indeed,
from a Taylor series expansion of $\hat{\psi}$ around $\vartheta^*$ and the condition that $\hat\psi'({\vartheta^*})=0$, we have
\begin{equation}\label{exp1}
\hat{\psi}({\vartheta})-\hat{\psi}({\vartheta^*})= \frac{({\vartheta}-{\vartheta^*})^2}{2} \hat{\psi}''({\vartheta^*})+ \frac{({\vartheta}-{\vartheta^*})^3}{6} \hat{\psi}^{(3)}({\vartheta^*})+ \frac{({\vartheta}-{\vartheta^*})^4}{24} \hat{\psi}^{(4)}({\vartheta^*})+ o(({\vartheta}-{\vartheta^*})^4).
\end{equation}
Rewriting this equation yields
\begin{equation}\label{exp2}
\vartheta-\vartheta^* = B_1 \sqrt{\hat\psi({\vartheta})-\hat\psi({\vartheta^*})}+ B_2\left(\hat\psi({\vartheta})-\hat\psi({\vartheta^*})\right)+B_3 \left(\hat\psi({\vartheta})-\hat\psi({\vartheta^*})\right)^{3/2} + o(({\vartheta}-\vartheta^*)^4).
\end{equation}
The way we derive coefficients $B_i$ ($i=1,2,3$) is by including the expansion (\ref{exp2}) into (\ref{exp1}) and matching respective powers of
$\hat\psi({\vartheta})-\hat\psi({\vartheta^*})$.
Substituting  ${\vartheta}=\hat\Phi(s)$ and using $\hat{\psi}(\hat{\Phi}(s))=s$ completes the proof of (\ref{exp0}).

In the second step, we plug the expansion \eqref{exp0} into Proposition \ref{eq.Lsp.pos} and order the outcome according to powers of $s-\zeta^{*}$.
This will complete the proof of spectrally positive case {\bf (SP)}.

Similarly, when {\bf (SN)} holds then
\[
{\Phi}(s)=\vartheta^* + A_1 (s-\zeta^*)^{1/2} + A_2 ({s}-\zeta^{*}) + A_3 (s-\zeta^{*})^{3/2} + o((s-\zeta^{*})^{3/2})
\]
as $s\downarrow\zeta^*$. Then, using Proposition \ref{p.joint.transform} in the same way as before gives the required assertion after some simple manipulations.
\halmos

\section{Examples}
In this section, we illustrate our theory through a few examples.

\begin{example}[The $M/E(2,\nu)/1$  queue] In this case
\begin{eqnarray}\label{CL}
 X(t) = \sum_{i=1}^{N(t)} \sigma_i-t,
\end{eqnarray}
where $\sigma_i$ (where $i=1,2,\ldots$) are i.i.d.\ service times
that have an Erlang$(2,\nu)$ distribution. The
arrival process is a homogeneous Poisson process $N(t)$ with rate
$\lambda$. We assume that $\varrho:=2\lambda/\nu<1$. For the Laplace exponent, we have that
\[\hat\psi(\eta)=\eta-\lambda+\lambda\left(\frac{\nu}{\eta + \nu}\right)^2\]
which attains its minimum at
$${\vartheta}\s=\sqrt[3]{2\lambda\nu^2} -\nu$$
and it is equal to
$$\zeta\s=\frac{3\sqrt[3]{\lambda\nu^2}}{\sqrt[3]{2}}-\nu-\lambda.$$
One can easy check that all assumptions {\bf (SP)} are satisfied.
In particular, $\hat{\Phi}(z)$ is analytic in
$\mathbb{C}\setminus(-\infty,\zeta{\s}]$.
Then Proposition \ref{prop:joint.transform-expansion} gives  for $C_1(\alpha,\beta)$ that
\begin{multline*}
\frac{2^{5/3} \lambda  (\beta +\nu )^2 (2 \lambda -\nu ) \sqrt{\sqrt[3]{\lambda } \nu ^{2/3}}
   \left(\alpha +\sqrt[3]{2} \sqrt[3]{\lambda } \nu ^{2/3}\right) \left(\alpha +\sqrt[3]{2}
   \sqrt[3]{\lambda } \nu ^{2/3}+2 \nu \right)}{\sqrt{3} \nu  \left(\alpha ^2+2 \sqrt[3]{2}
   \alpha  \sqrt[3]{\lambda } \nu ^{2/3}-\lambda  (\alpha +\nu )+2^{2/3} \lambda ^{2/3} \nu^{4/3}-\sqrt[3]{2} \lambda ^{4/3} \nu ^{2/3}\right)^2}\ \cdot
\\
  \frac{1}{\left(2 \beta ^3-3 \sqrt[3]{2} \beta ^2
   \sqrt[3]{\lambda } \nu^{2/3}+6 \beta ^2 \nu -6 \sqrt[3]{2} \beta  \sqrt[3]{\lambda } \nu^{5/3}+2 \nu ^2 (3 \beta +\lambda )-3 \sqrt[3]{2} \sqrt[3]{\lambda } \nu^{8/3}+2 \nu^3\right)}
\end{multline*}
which is sufficient to compute the bivariate Laplace transform of the quasi-stationary measure given in \eqref{QSLT}. As a numerical illustration, if $\lambda =1$ and $\nu =3$ then
\begin{align*}
&\tilde{\mu}(\alpha,\beta)=\frac{C_1(\alpha,\beta)}{C_1(0,0)}=
-\Bigg\{\left(8-2^{1/3} 3^{5/3}\right)^2 \left(\alpha +2^{1/3} 3^{2/3}\right) \left(\alpha +2^{1/3} 3^{2/3}+6\right) (\beta +3)^2\Bigg\}
\\&:\Bigg\{2 \left(-\alpha ^2-2^{4/3} 3^{2/3} \alpha +\alpha -2^{2/3} 3^{4/3}+2^{1/3}3^{2/3}+3\right)^2 \bigg(-2 \beta ^3+3 \left(2^{1/3} 3^{2/3}-6\right) \beta ^2
\\&+18 \left(2^{1/3} 3^{2/3}-3\right) \beta +9 \left(2^{1/3} 3^{5/3}-8\right)\bigg)\Bigg\}.
\end{align*}

To identify the bivariate Laplace transform of the second order quasi-stationary measure $\xi$ given in Theorem  \ref{mainresult} we need to find $C_3(\alpha, \beta)$.
Unfortunately, its expression is rather complex:
\begin{align*}
&C_3(\alpha,\beta)=\\
&\left(1-\frac{2 \lambda }{\nu }\right) \Bigg\{\bigg(\frac{2^{2/3} \sqrt{\sqrt[3]{\lambda } \nu ^{2/3}} \left(\sqrt[3]{2} \nu ^{2/3} \sqrt[3]{\lambda }+\frac{\nu ^2 \lambda }{\left(\sqrt[3]{2} \nu ^{2/3} \sqrt[3]{\lambda }+\alpha \right)^2}-\lambda +\alpha -\nu \right)}{\sqrt{3}}\\
&-\frac{2^{2/3} \alpha  \sqrt{\sqrt[3]{\lambda } \nu ^{2/3}} \left(\alpha +\sqrt[3]{2}
   \sqrt[3]{\lambda } \nu ^{2/3}-\nu \right) \left(\alpha ^2+3\ \sqrt[3]{2} \alpha
   \sqrt[3]{\lambda } \nu ^{2/3}+3\ 2^{2/3} \lambda ^{2/3} \nu ^{4/3}\right)}{\sqrt{3}
   \left(\alpha +\sqrt[3]{2} \sqrt[3]{\lambda } \nu ^{2/3}\right)^3}\bigg)\\
&/\bigg(\bigg(\sqrt[3]{2} \nu ^{2/3} \sqrt[3]{\lambda }+\frac{\nu ^2 \lambda }{\left(\sqrt[3]{2} \nu ^{2/3} \sqrt[3]{\lambda }+\alpha \right)^2}-\lambda +\alpha -\nu \bigg)^2 \bigg(\frac{3\nu ^{2/3} \sqrt[3]{\lambda }}{2^{2/3}}-\frac{\nu ^2 \lambda }{(\beta +\nu )^2}-\beta -\nu \bigg)^2\bigg)\\
&
-\bigg\{\frac{8 \left(\sqrt[3]{2} \nu ^{2/3} \sqrt[3]{\lambda }+\frac{\nu ^2 \lambda }{\left(\sqrt[3]{2} \nu ^{2/3} \sqrt[3]{\lambda }+\alpha \right)^2}-\lambda +\alpha -\nu \right) \left(1-\frac{2 \lambda  \nu ^2}{\left(\sqrt[3]{2} \nu ^{2/3} \sqrt[3]{\lambda }+\alpha \right)^3}\right)^2 \left(\sqrt[3]{\lambda } \nu ^{2/3}\right)^{3/2}}{\sqrt{3}}\\
&-\frac{8 \sqrt{3} \lambda  \nu ^2 \left(\sqrt[3]{2} \nu ^{2/3} \sqrt[3]{\lambda }+\frac{\nu ^2 \lambda }{\left(\sqrt[3]{2} \nu ^{2/3} \sqrt[3]{\lambda }+\alpha \right)^2}-\lambda +\alpha -\nu \right)^2 \left(\sqrt[3]{\lambda } \nu ^{2/3}\right)^{3/2}}{\left(\sqrt[3]{2} \nu ^{2/3} \sqrt[3]{\lambda }+\alpha \right)^4}\\
&-\frac{8 \alpha ^3 \left(\sqrt[3]{\lambda } \nu ^{2/3}\right)^{3/2} \left(\alpha +\sqrt[3]{2}
   \sqrt[3]{\lambda } \nu ^{2/3}-\nu \right) \left(\alpha ^2+3 \sqrt[3]{2} \alpha
   \sqrt[3]{\lambda } \nu ^{2/3}+3\sqrt[3]{4} \lambda ^{2/3} \nu ^{4/3}\right)^3}{\sqrt{3}
   \left(\alpha +\sqrt[3]{2} \sqrt[3]{\lambda } \nu ^{2/3}\right)^9}\\
&+\frac{16 \sqrt{3} \alpha  \lambda  \nu ^2 \left(\sqrt[3]{2} \nu ^{2/3} \sqrt[3]{\lambda }+\frac{\nu ^2 \lambda }{\left(\sqrt[3]{2} \nu ^{2/3} \sqrt[3]{\lambda }+\alpha \right)^2}-\lambda +\alpha -\nu \right) \left(1-\frac{2 \lambda  \nu ^2}{\left(\sqrt[3]{2} \nu ^{2/3} \sqrt[3]{\lambda }+\alpha \right)^3}\right) \left(\sqrt[3]{\lambda } \nu ^{2/3}\right)^{3/2}}{\left(\sqrt[3]{2} \nu ^{2/3} \sqrt[3]{\lambda }+\alpha \right)^4}\\
&+\frac{16 \sqrt{3} \lambda  \left(\sqrt[3]{2} \sqrt[3]{\lambda } \nu ^{2/3}-\nu \right) \nu ^2 \left(\sqrt[3]{2} \nu ^{2/3} \sqrt[3]{\lambda }+\frac{\nu ^2 \lambda }{\left(\sqrt[3]{2} \nu ^{2/3} \sqrt[3]{\lambda }+\alpha \right)^2}-\lambda +\alpha -\nu \right) \left(1-\frac{2 \lambda  \nu ^2}{\left(\sqrt[3]{2} \nu ^{2/3} \sqrt[3]{\lambda }+\alpha \right)^3}\right) \left(\sqrt[3]{\lambda } \nu ^{2/3}\right)^{3/2}}{\left(\sqrt[3]{2} \nu ^{2/3} \sqrt[3]{\lambda }+\alpha \right)^4}\\
&+\frac{32 \lambda  \nu ^2 \left(\sqrt[3]{\lambda } \nu ^{2/3}\right)^{3/2} \left(\alpha +\sqrt[3]{2} \sqrt[3]{\lambda } \nu ^{2/3}-\nu \right)^3 \left(\alpha ^2+2 \sqrt[3]{2} \alpha  \sqrt[3]{\lambda } \nu
   ^{2/3}-\alpha  \lambda +2^{2/3} \lambda ^{2/3} \nu ^{4/3}-\sqrt[3]{2} \lambda ^{4/3} \nu ^{2/3}-\lambda  \nu \right)^2}{\sqrt{3} \left(\alpha +\sqrt[3]{2} \sqrt[3]{\lambda } \nu ^{2/3}\right)^9}\\
\end{align*}
\begin{align*}
&+\frac{16\sqrt[3]{4} \alpha \lambda   \nu ^2 \left(\sqrt[3]{2} \nu ^{2/3} \sqrt[3]{\lambda }+\frac{\nu ^2 \lambda }{\left(\sqrt[3]{2} \nu ^{2/3} \sqrt[3]{\lambda }+\alpha \right)^2}-\lambda +\alpha -\nu \right)^2 \sqrt{\sqrt[3]{\lambda } \nu ^{2/3}}}{\sqrt{3} \left(\sqrt[3]{2} \nu ^{2/3} \sqrt[3]{\lambda }+\alpha \right)^4}\\
&+\frac{16\sqrt[3]{4}  \lambda  \left(\sqrt[3]{2} \sqrt[3]{\lambda } \nu ^{2/3}-\nu \right) \nu ^2 \left(\sqrt[3]{2} \nu ^{2/3} \sqrt[3]{\lambda }+\frac{\nu ^2 \lambda }{\left(\sqrt[3]{2} \nu ^{2/3} \sqrt[3]{\lambda }+\alpha \right)^2}-\lambda +\alpha -\nu \right)^2 \sqrt{\sqrt[3]{\lambda } \nu ^{2/3}}}{\sqrt{3} \left(\sqrt[3]{2} \nu ^{2/3} \sqrt[3]{\lambda }+\alpha \right)^4}\\
&-\left(\alpha ^2+3 \sqrt[3]{2} \alpha
   \sqrt[3]{\lambda } \nu ^{2/3}+3\sqrt[3]{4} \lambda ^{2/3} \nu ^{4/3}\right) \left(\alpha ^2+2
   \sqrt[3]{2} \alpha  \sqrt[3]{\lambda } \nu ^{2/3}-\lambda  (\alpha +\nu )+2^{2/3} \lambda
   ^{2/3} \nu ^{4/3}-\sqrt[3]{2} \lambda ^{4/3} \nu ^{2/3}\right)\cdot\\
    &\qquad\frac{16\sqrt[3]{4} \alpha  \lambda  \sqrt{\sqrt[3]{\lambda } \nu ^{2/3}} \left(\alpha
   +\sqrt[3]{2} \sqrt[3]{\lambda } \nu ^{2/3}-\nu \right)^3 \left(\alpha +\sqrt[3]{2}
   \sqrt[3]{\lambda } \nu ^{2/3}+2 \nu \right) }{3 \sqrt{3} \left(\alpha
   +\sqrt[3]{2} \sqrt[3]{\lambda } \nu ^{2/3}\right)^8}\\
&+86 \sqrt[3]{2} \lambda  \left(\alpha +\sqrt[3]{2} \sqrt[3]{\lambda } \nu ^{2/3}-\nu
   \right)^4\cdot\\
   &\qquad\frac{ \left(\alpha +\sqrt[3]{2} \sqrt[3]{\lambda } \nu ^{2/3}+2 \nu \right) \left(\alpha
   ^2+2 \sqrt[3]{2} \alpha  \sqrt[3]{\lambda } \nu ^{2/3}-\alpha  \lambda +2^{2/3} \lambda
   ^{2/3} \nu ^{4/3}-\sqrt[3]{2} \lambda ^{4/3} \nu ^{2/3}-\lambda  \nu \right)^2}{9 \sqrt{3}
   \sqrt{\sqrt[3]{\lambda } \nu ^{2/3}} \left(\alpha +\sqrt[3]{2} \sqrt[3]{\lambda } \nu
   ^{2/3}\right)^7}\bigg\}\\
&/\Bigg\{6 \left(\sqrt[3]{2} \nu ^{2/3} \sqrt[3]{\lambda }+\frac{\nu ^2 \lambda }{\left(\sqrt[3]{2} \nu ^{2/3} \sqrt[3]{\lambda }+\alpha \right)^2}-\lambda +\alpha -\nu \right)^4 \left(\frac{3\nu ^{2/3} \sqrt[3]{\lambda }}{2^{2/3}}-\frac{\nu ^2 \lambda }{(\beta +\nu )^2}-\beta -\nu \right)\Bigg\}\Bigg\}.
\end{align*}
The expression is easy to evaluate numerically. Taking $\lambda =1$ and $\nu =3$ gives
\begin{align*}
\tilde{\xi}(\alpha,\beta)=\frac{I_1(\alpha, \beta)}{I_2(\alpha, \beta},
\end{align*}
where
\begin{align*}
&I_1(\alpha,\beta)=
2^{5/3} \left(\alpha +2^{1/3} 3^{2/3}\right) \left(\alpha +2^{1/3}3^{2/3}+6\right) (\beta +3)^2\\
&I_2(\alpha,\beta)=
3^{7/6}\left(-\alpha ^2-2^{4/3} 3^{2/3} \alpha +\alpha -3^{4/3}\ 2^{2/3} +2^{1/3} 3^{2/3}+3\right)^2 \left(-2 \beta ^3+
3 \left(2^{1/3}3^{2/3}-6\right) \beta ^2\right.\\
&\left.\hspace{2cm}+18 \left(2^{1/3} 3^{2/3}-3\right) \beta +9 \left(3\sqrt[3]{2} 3^{2/3}-8\right)\right).
\end{align*}
Observe that Laplace transforms $\tilde{\mu}(\alpha,\beta)$ and $\tilde{\xi}(\alpha,\beta)$ can be inverted as they are linear combinations
of powers of $1/({\rm const}_\alpha+\alpha)$ and $1/({\rm const}_\beta+\beta)$
for various constants ${\rm const}_\alpha$ and ${\rm const}_\beta$.
\end{example}

\begin{example}[Linear Brownian motion] In this case $X(t)=
B(t)-t,$ where $B(t)$ is a standard Brownian
motion. Note that this process is spectrally positive {\it and}
spectrally negative. We will apply the spectrally positive results.
It is not hard to check that
\[\hat{\psi}({\vartheta})={\vartheta}+\frac{{\vartheta}^2}{2},\]
so that $\vartheta\s=
-1$ and $\zeta\s=-1/2.$
Furthermore,
$$\widehat{\Phi}(s)=-\left(1+\sqrt{1+2s}\right)
$$
and assumptions {\bf (SP)} are satisfied.

It is a matter of
straightforward computations now to obtain that
\[C_1(\alpha,\beta)=-\frac{4 \sqrt{2}}{(\alpha +1)^2 (\beta +1)^2}\]
and thus
\[\tilde{\mu}(\alpha,\beta)=\frac{C_1(\alpha,\beta)}{C_1(0,0)}=\left(\frac{1}{1 +\alpha}\right)^2\left(
\frac{1}{1+\beta}\right)^2.\]
Thus, the quasi-stationary
distributions of $Q(0)$ and $Q(t)$ (conditioned that busy period lasts longer than $t$, for large $t$) are both Erlang(2)
with mean $2$, whereas the stationary workload itself has an
exponential distribution with mean $1/2$; see
\cite{Iglehart,Martmart,MPR}.
Moreover, simple calculations lead to:
\[
C_3(\alpha,\beta)=-\frac{8 \sqrt{2} (2 + \alpha (2 + \alpha) + \beta (2 + \beta))}{
(1 + \alpha)^4 (1 + \beta)^4}.
\]
Hence,
\[\tilde{\xi}(\alpha,\beta)=
\frac{2}{(\alpha+1)^4 (\beta+1)^2}+\frac{2}{(\alpha+1)^2 (\beta+1)^4}-\frac{4}{(\alpha+1)^2 (\beta+1)^2}.
\]
One can observe that the {\it second order quasi-stationary measure} $\xi$ is given by
\[\xi(dx,dy)/dxdy=2f_{4,2}(x,y)+2f_{2,4}(x,y)-4f_{2,2}(x,y),
\]
where $f_{i,j}(x,y)$ is a density of the bivariate distribution of two independent Erlang distributions ${\rm Erlang}(1,i)$ and ${\rm Erlang}(1,j)$.
\end{example}



\begin{thebibliography}{100}

\bibitem{MR3498004}
{\sc Asselah, A., Ferrari, P.A., Groisman, P. and
              Jonckheere, M.} (2016).
              Fleming--{V}iot selects the minimal quasi-stationary
              distribution: {T}he {G}alton--{W}atson case. {\it Ann. Inst. Henri Poincar\'e Probab. Stat.} {\bf 52(2)}, 647--668.

\bibitem{1}
{\sc Asmussen, S.} (2003).
Applied Probability and Queues.
second ed., Springer-Verlag.

\bibitem{BertoinDoney96}
{\sc Bertoin, J.\ and  Doney, R.A.} (1996). Some asymptotic results for transient random walks. {\it Adv. Appl. Probab.} {\bf 28}, 207--226.

\bibitem{Jose}
{\sc Blanchet, J., Glynn, P.\ and Zheng, S.} (2013).
Empirical Analysis of a Stochastic Approximation Approach for Computing Quasi-stationary Distributions.
In EVOLVE --- A Bridge between Probability, Set Oriented Numerics, and Evolutionary Computation II, p. 19--37,
Springer-Verlag.

\bibitem{champagnat16}
{\sc Champagnat, N.\ and  Villemonais, D.} (2016). Population processes with unbounded extinction rate conditioned to non-extinction. \url{https://arxiv.org/abs/1611.03010v1}


\bibitem{2}
{\sc Diaconis, P.\ and  Stroock, D.} (1991).
Geometric bounds for eigenvalues of Markov Chains.
{\it The Annals of Probability} {\bf 1}, 36--61.

\bibitem{Doetsch1974} {\sc  Doetsch, G.} (1974).
{\it Introduction to the Theory and Application of the Laplace
Transformation}. Springer, Berlin, Germany.

\bibitem{3}
{\sc Down, D., Meyn, S.P.\ and Tweedie, R.L.} (1995).
Exponential and uniform ergodicity of Markov processes.
{\it Annals of Probability} {\bf 23(4)}, 1671--1691.

\bibitem{MR2318407}
{\sc Ferrari, P.A.\ and  Mari{\'c}, N.} (2007).
Quasi stationary distributions and {F}leming-{V}iot processes
              in countable spaces. {\it Electron. J.\ Probab.} {\bf 12(24}, 684--702.

\bibitem{foley17}
{\sc Foley, R.D.\ and  McDonald, D.R.} (2017). Yaglom limits can depend on the starting state. \url{https://arxiv.org/abs/1709.07578}

\bibitem{henrici}
{\sc Henrici, P.} (1977). {\it Applied and Computational Complex
Analysis,  Vol.\ 2}. Wiley, New York, USA.

\bibitem{Iglehart}
{\sc Iglehart, D.} (1974). Random walks with negative drift
conditioned to stay positive. {\it Journal of Applied Probability}
{\bf 11}, 742--751.


\bibitem{jackaroberts}
{\sc Jacka, S.\ and  Roberts, G.} (1995). Weak convergence of
conditioned processes on a countable state space. {\it Journal of
Applied Probability} {\bf 32}, 902--916.

\bibitem{4}
{\sc Kingman, J.F.C.} (1972).
Regenerative Phenomena. John Wiley and Sons, INC, New York.


\bibitem{kyprianou}
{\sc Kyprianou, E.} (1971). On the quasi-stationary distribution
of the virtual waiting time in queues with Poisson arrivals. {\it
Journal of Applied Probability} {\bf 8}, 494--507.

\bibitem{kyprianoubook}
{\sc Kyprianou, A.} (2006). {\it Introductory Lectures on
Fluctuations of L\'{e}vy Processes with Applications.} Springer,
Berlin, Germany.

\bibitem{kyprpalm} {\sc Kyprianou, A.\ and  Palmowski, Z.} (2006).
Quasi-stationary distributions for L\'{e}vy processes. {\it
Bernoulli} {\bf 12}, 571--581.

\bibitem{MR2299923}
{\sc Lambert, A.} (2007).
Quasi-stationary distributions and the continuous-state
              branching process conditioned to be never extinct.
{\it Electron. J. Probab.}, {\bf 12(14)}, 420--446.

\bibitem{5}
{\sc Lindvall, T.} (1992).
Lectures on the Coupling Method. John Wiley and Sons, INC, New York.

\bibitem{6}
{\sc Lund, R.B.,  Meyn S.P.\ and Tweedie, R.L.} (1996).
Computable exponential convergence rates for stochastically ordered Markov processes.
{\it Ann. Appl. Probab.} {\bf 6(1)}, 218--237.

\bibitem{MPR}
{\sc Mandjes, M., Palmowski, Z.\ and  Rolski T.} (2012).
Quasi-stationary workload in a L\'evy-driven storage system.
{\it Stochastic Models} {\bf 28(3)}, 413--32.

\bibitem{Martmart}
{\sc Martinez, S.\ and San Martin, J.} (1994).
Quasi-stationary distributions for a Brownian motion with drift
and associated limit laws. {\it Journal of Applied Probability}
{\bf 31}, 911--920.


\bibitem{7}
{\sc Meyn, S.P.\ and  Tweedie, R.L.} (1993).
Markov chains and stochastic stability. Communications and Control Engineering Series. Springer-Verlag, London.

\bibitem{PR}
{\sc Polak, M.\ and  Rolski, T.} (2012).
A note on speed of convergence to the quasi-stationaery distribution.
{\it Demonstratio Mathematica}, {\bf XLV(2)}, 384--397.

\bibitem{pollett}
{\sc Pollett, P.}  Quasi-stationary Distributions: A Bibliography. \url{www.maths.uq.edu.au/~pkp/papers/qsds/qsds.pdf}

\bibitem{rivero}
{\sc Haas, B.\ and  Rivero, V.} (2012).
Quasi-stationary distributions and Yaglom limits of self-similar Markov processes.
{\it Stochastic Processes and their Applications} {\bf 122(12)}, 4054--4095.

\bibitem{Seneta}
{\sc Seneta, E.\ and Vere-Jones, D.} (1966). On
quasi-stationary distributions in discrete-time Markov chains with
a denureable infinity of states. {\it Journal of Applied
Probability} {\bf 3}, 403--434.

\bibitem{8}
{\sc Thorisson, H.} (2000).
Coupling, Stationarity, and Regeneration. Springer-Verlag, New York.

\bibitem{tweedie}
{\sc Tweedie, R.} (1974). Quasi-stationary distributions for
Markov chains on a general state space. {\it Journal of Applied
Probability} {\bf 11}, 726--741.

\bibitem{MR0022045}
{\sc Yaglom, A.M.} (1947).
Certain limit theorems of the theory of branching random
processes. {\it Doklady Akad. Nauk SSSR (N.S.)} {\bf 56}, 795--798.

\end{thebibliography}
\end{document}